\documentclass[3p, authoryear, times]{elsarticle}

\usepackage{graphicx}

\usepackage{subcaption} 
\usepackage{morefloats}
\usepackage[section]{placeins}

\usepackage{amsmath, amssymb, amsthm}
\usepackage{bm}
\usepackage{xfrac}
\usepackage[T1]{fontenc}
\usepackage{times}
\usepackage{fix-cm}
\usepackage{geometry}
\usepackage{mathtools} 

\usepackage{enumitem}
\usepackage{cancel}
\usepackage{xcolor}
\usepackage{booktabs}
\usepackage{url}
\usepackage{framed}
\usepackage{listings}
\usepackage{comment}
\usepackage[nottoc]{tocbibind}

\usepackage{natbib}

\usepackage[hidelinks]{hyperref}
\usepackage{cleveref}

\newcommand{\Sd}{\mathbb{S}^{d-1}}

\newcommand{\norm}[1]{\left\| #1 \right\|}

\newcommand{\Ld}{L^2(\mathbb{S}^{d-1},\, \nu_d)}

\newcommand{\mtrx}[1]{\mathbf{#1}}
\newcommand{\mean}[1]{\bar{#1}}

\theoremstyle{plain}
\newtheorem{lemma}{Lemma}[section]

\newtheorem{theorem}{Theorem}[section]
\newtheorem{corollary}{Corollary}[section]
\newtheorem{result}{Result}[section]

\journal{Statistics and Probability Letters}

\begin{document}

\begin{frontmatter}

\title{Addressing parity blindness of data-driven Sobolev tests on the hypersphere} %
\author{Marcio Reverbel\corref{cor1}\fnref{fn1}}
\ead{marcio.reverbel@kuleuven.be}

\affiliation{organization={Universit\'e Libre de Bruxelles},
                city={Brussels},
                country={Belgium}
                }

\cortext[cor1]{Corresponding author}

\fntext[fn1]{Present address: Department of Mathematics, KU Leuven, Celestijnenlaan 200, B-02.29, 3001 Heverlee, Belgium.}

\begin{abstract}
We study the asymptotic behavior of the data-driven Sobolev test for testing uniformity on the (hyper)sphere. We show that it can be blind to certain contiguous alternatives and propose a simple modification of the test statistic. This adapted test retains consistency under fixed alternatives and achieves non-trivial asymptotic power against contiguous alternatives for which the original test fails. Simulation results support our theoretical findings.
\end{abstract}


\begin{keyword}

hypothesis testing \sep
directional statistics \sep 
local asymptotics \sep
contiguity

\end{keyword}
\end{frontmatter}

\section{Introduction}
\label{chap:intro}

Directional data, i.e., observations which lie on the surface of a (hyper)sphere, appear in applications where only the directions are relevant, such that the magnitude of each observation can be discarded. Such applications are found in many different fields, including astronomy, medicine, genetics, bioinformatics, image analysis, machine learning and text mining, among others \citep{Pewsey2021, Garcia2018}. 

Testing for uniformity on an independent and identically distributed (iid) sample of unit vectors on the unit sphere $\Sd := \{\bm x \in \mathbb{R}^d : \norm{\bm x} = \bm x^\top \bm x = 1 \}$ is a problem that has been widely discussed in the literature. It arises when one has directional data and needs to test whether these directions appear “at random” (uniform) or show some preferred orientation (anisotropy). For example, in 1767, John Michell \citep{Michell1783} was interested in how stars were distributed in the night sky and argued that there were too many pairs or groups of stars clustered together than would be possible in a uniformly distributed set of stars. The class of Sobolev tests presented by \citet{Gine1975} is the most extensive class of tests devised to test uniformity \citep[p.~12]{Pewsey2021}. It includes tests that preceded it historically, such as the Rayleigh \citep{Rayleigh1919} and Bingham \citep{Bingham1974} tests. 

The data-driven Sobolev test proposed by \cite{Jupp2008}, while consistent against all fixed alternatives, is blind to contiguous sequences of alternatives characterized by angular functions whose $k$\textsuperscript{th} derivatives vanish at zero for $k$ odd. That is, under these alternatives, its asymptotic distribution is the same as under the null. We propose a modification by placing a lower-bound on $\hat k$, and show that this results in a test with non-trivial asymptotic power against the aforementioned contiguous alternatives. Simulations confirm our theoretical findings.

This work is organized as follows: \Cref{chap:background} provides a theoretical introduction to Sobolev tests. \Cref{chap:developments} discusses the asymptotic properties of the data-driven Sobolev test, \Cref{chap:new_results} presents the adapted test, and \Cref{chap:simulations} compares the original and adapted data-driven tests with simulations. \Cref{chap:conclusion} offers a small conclusion with directions for further research.

\section{Preliminaries}
\label{chap:background}
Sobolev tests of uniformity on $\Sd = \{\bm{x} \in \mathbb{R}^d: \norm{\bm{x}} = 1\}$ are constructed from an infinite-dimensional orthonormal basis on $\Ld$, where $\nu_d$ is the uniform probability measure on $\Sd$. Given $n$ iid observations $\mtrx{X}^{(n)} = (\bm{x}_1, \ldots, \bm{x}_n)$ of a random vector $\bm X \in \Sd$, we would like to test the hypothesis that the observations are drawn from a uniform distribution: $\mathcal{H}_0: \bm X \sim \text{Unif}(\Sd)$. Sobolev tests \citep{Gine1975} reject the null hypothesis for high values of the test statistic: 

\begin{equation}
\label{eq:Sobolev_test}
S_{v_k} := \frac{1}{n} \sum_{i,j=1}^{n} \sum_{k=1}^{\infty} v_k^2 h_k (\bm{x}_i^\top \bm{x}_j), \quad \text{ with } \quad h_k(t) :=
\begin{cases} 
    2 \cos \big( k \arccos(t) \big), & \text{if } d = 2, \\
    \left( 1 + \frac{2k}{d-2} \right)\; \sum_{j = 0}^{\lfloor k/2\rfloor}(-1)^j c_{k,j}^{\frac{d-2}{2}} t^{k-2j}, & \text{if } d > 2,
\end{cases}, 
\end{equation}
where
$(v_k)_{k\in\mathbb{N}_1}$ is a real sequence satisfying $\sum_{k=1}^{\infty}v_k^2 d_{k} < \infty$,  
\begin{equation*}
    d_k = \binom{d + k - 3}{d - 2}
    +
    \binom{d + k - 2}{d - 2}, 
    \quad \text { and } \quad
    c_{k,j}^\lambda \;:=\; \frac{2^{\,k-2j}\,\Gamma\bigl(k-j+\lambda\bigr)}{\Gamma(\lambda)\,j!\,\bigl(k-2j\bigr)!}.
\end{equation*}

Under the null hypothesis, we have 
$$S_{v_k} \xrightarrow[\mathcal{H}_0]{\mathcal{D}} \sum_{k = 1}^{\infty}v_k^2\chi^2_{d_k},$$
where $\chi^2_{d_k}$ denotes the chi-square distribution with $d_k$ degrees of freedom.

The Sobolev test as defined in \Cref{eq:Sobolev_test} is a class of tests which depends on the sequence $v_k$ chosen, and includes the Rayleigh \citep{Rayleigh1919} and Bingham \citep{Bingham1974} tests, as well as the score test of uniformity in the exponential models proposed by \cite{Beran1979}. While a purely theoretical Sobolev test in which $v_k \neq 0$ for all $k > 0$ is consistent against all alternatives, practical implementations of this test -- with a finite number of non-zero elements in the sequence $v_k$ -- will be blind to specific sets of alternatives \citep{Mardia2000, Jupp1983, Gine1975}. 

The Rayleigh test of uniformity, $R_n$, is a Sobolev test with $v_k = \mathbb{I}[k = 1]$. We also have 

\begin{equation}
\label{eq:Rayleigh_test}
R_n =  d\,n\, \norm{\mean{\bm{x}}}^2,
\end{equation}
where $\mean{\bm{x}}$ denotes the sample average. The Rayleigh test coincides with the likelihood ratio test against von Mises alternatives, and is the locally most powerful invariant test against these alternatives \citep{Mardia2000}. However, since it is a test based on the sample average, which is insensitive to identical perturbations on opposing sides of the unit sphere, the Rayleigh test is blind to alternatives with any kind of antipodal symmetry. 

The Bingham test, $B_n$, is a Sobolev test with $v_k = \mathbb{I}[k = 2]$. It takes the form 
\begin{equation}
    \label{eq:Bingham_test}
    B_n = n\,\frac{d(d+2)}{2}\left[\frac{1}{n^2}\sum_{i, j = 1}^{n} (\bm{x}_i^\top \bm{x}_j)^2 - \frac{1}{d}\right] = n\, \frac{d(d+2)}{2}\left[\mathbf{tr}[\hat\Sigma] + 2\mean{\bm{x}}^\top\hat\Sigma\mean{\bm{x}} +  \norm{\mean{\bm{x}}}^4 - \frac{1}{d}\right],
\end{equation}
where $\hat \Sigma$ denotes the sample covariance matrix. It is blind to alternatives for which the expected value of the sample scatter matrix under the alternative is indistinguishable from the expected value under the null: $\mathbb{E}_{\mathcal{H}_0}[\bm X\bm X^\top] = d^{-1}\,\mtrx{I}_d$.

The following version of the Sobolev test was introduced by \cite{Beran1979}. Its test statistic, $S_K$, takes the form of a Sobolev test with $v_k = \mathbb{I}[k\leq K]$ for some $K < \infty$:

\begin{equation}
    \label{eq:quadratic_score_test}
    S_K := \frac{1}{n} \sum_{i,j=1}^{n} \sum_{k=1}^{K} h_k (\bm{x}_i^\top \bm{x}_j).
\end{equation}
This test plays an important role in the data-driven Sobolev test introduced by \cite{Jupp2008}.

\section{Data-driven Sobolev tests}
\label{chap:developments}

One of the most difficult aspects concerning the Sobolev tests lies in the choice of the sequence $v_k$, as this impacts the kind of alternatives the test cannot detect. In the case of the quadratic score test \eqref{eq:quadratic_score_test}, this translates into choosing $K$. \cite{Jupp2008} proposes a variant of the score test in which this choice is made by the data, via a penalized score statistic. Define
\begin{equation}
    \label{eq:penalized_score}  
    B_S(K) = S_K - p_K\;\log(n),
\end{equation}
where $S_K$ is the quadratic score test \eqref{eq:quadratic_score_test} and $p_K = \sum_{k=1}^K d_k$. The data-driven Sobolev test statistic, $S_{\Hat{k}}$, is given by:
\begin{equation}
    \label{eq:data-driven test}
    S_{\Hat{k}} := \frac{1}{n} \sum_{i,j=1}^{n} \sum_{k=1}^{\Hat{k}} h_k (\bm{x}_i^\top \bm{x}_j),
    \quad \quad \Hat{k} = \inf\left\{ K \in \mathbb{N}: B_S(K) = \underset{m \in \mathbb{N}} {\sup}B_S(m)\right\}
\end{equation}

The test rejects uniformity for large values of $S_{\Hat{k}}$. In practice, one would not calculate infinitely many $B_S$ statistics to choose $\Hat{k}$, but rather place an upper limit $M$ on $m$.

\cite{Jupp2008} derived important asymptotic properties for the data-driven Sobolev test: the convergence in probability under the null of $\Hat{k}$ to $1$, the asymptotic distribution of $S_{\Hat{k}}$ under the null, and the consistency of the test against all alternatives to uniformity. Together, these theorems tells us that, for large samples, $\Hat{k}$ tends to be close to $1$ under uniformity, which leads to a simple test statistic, and that $\Hat{k}$ tends to rise sufficiently to cause rejection of the null hypothesis under any (fixed) alternative distribution. See Theorems 3.1, 3.2 and 3.3 in \cite{Jupp2008} for more details.

The asymptotic behavior of Sobolev tests under the null hypothesis has been well explored and follows directly from Theorem 4.1 in \cite{Gine1975}. However, research into the asymptotic behavior of Sobolev tests under non-null distributions is much more recent, with \cite{Garcia2024} presenting important results under some of the most commonly considered alternative distributions. These distributions have densities of the form: 
\begin{equation}
\label{eq:angular_distribution}
    \bm{x} \mapsto c_{d, \kappa, g}\; g(\kappa\; \bm{\mu}^\top\bm{x}),
\end{equation}
where $\kappa > 0$ is a concentration parameter, $\bm{\mu} \in \Sd $ is a location parameter, and $g: \mathbb R \to \mathbb{R}^+$ is an \emph{angular function}, i.e, a function which only depends on the angle between between $\bm{x}$ and $\bm{\mu}$, and $c_{d, \kappa, g}$ is a normalizing constant. Distributions that fall under this category include the {von Mises-Fisher} distribution, with $g(s) = \exp{(s)}$; the {Watson} distribution, with $g(s) = \exp{(s^2)}$; other exponential distributions of the form $g(s) = \exp{(s^b)}$, with $b > 0$; and the {directional Cauchy distribution}, with $g(s) = (1 + 2s)^{-1}$ and $s = \kappa\; (1 - \bm{\mu}^\top\bm{x})$ instead. The \emph{angular} terminology is used here to emphasize the fact that these distributions are rotationally symmetric with respect to the location parameter, which makes them of particular interest. Let $P_{\kappa_n, g}$ denote a local version of the distribution in \Cref{eq:angular_distribution}, such that $\kappa = \kappa_n$ converges to zero as $n$ increases. Finally, we have the following result from \cite{Garcia2024}:
\begin{result}[{\citealp[Corollary 5.1]{Garcia2024}}]\label{cor:parity}
For \(d\ge2\), consider a sequence $v_k$ with only finitely many non‐zero terms such that \(v_k=0\) for each even (resp.\ odd) \(k\).  Let \(g\) be an angular function, \(q \;(\ge K_v :=\max\{k : v_k \neq 0\})\) times differentiable at zero, with \(g^{(k)}(0)=0\) for each odd (resp.\ even) \(k\in\{k_v,\dots,q\}\), where $k_v := \min\{k : v_k \neq 0\}$. Fix $\tau > 0$.  Then, under \(P_{\kappa_n, g}\) with \(\kappa_n=n^{-1/(2q)}\tau\),  
\[
  S_{v_k} \xrightarrow{\mathcal D}
  \sum_{k=1}^{K_v} v_k^2\,\chi^2_{d_{k}} \quad \text{as} \quad n\to\infty. 
\]
\end{result}
This Corollary states that if a Sobolev test only has non-zero $v_k$ for $k$ of one parity, and the $k$\textsuperscript{th} derivatives of $g$ vanish at zero for all $k$ of the same parity, then the test is blind, in the sense that its asymptotic distribution under the local alternative is no different than under the null.  
\section{An adapted data-driven Sobolev test}

\label{chap:new_results}
Based on the results of \Cref{chap:developments}, we investigate the non-null asymptotic behavior of the data-driven Sobolev test in the Le Cam sense. \cite{Jupp2008} demonstrated the test's consistency in a fixed setting. However, the test's behavior under sequences of alternatives which become increasingly close to the null distribution remains unexplored. Specifically, we will see that the data-driven test is blind to contiguous alternatives whose odd-numbered derivatives of the angular function vanish at zero. First, recall the definition of contiguity: For two sequences of probability measures, $Q_n$ and $P_n$, we say that $Q_n$ is contiguous with respect to $P_n$ (notation: $Q_n \lhd P_n$) if, for every sequence of measurable sets $A_n$, $P_n(A_n) \xrightarrow{n\to\infty} 0$ implies $Q_n(A_n) \xrightarrow{n\to\infty} 0.$ We can now present the following lemma: 

\begin{lemma}
\label{lemma:main_result}
Let $S_{\hat{k}}$ be the data-driven Sobolev test as defined in \Cref{eq:data-driven test}, and let $P_{\kappa_n}$ be a sequence of distributions. If $P_{\kappa_n} \lhd P_0$, then: 
\begin{equation}
    P_{\kappa_n}[\hat{k} = 1] \xrightarrow{n\to\infty} 1
\end{equation}
\end{lemma}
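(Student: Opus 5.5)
The argument is essentially the definition of contiguity applied to the sets $A_n := \{\hat k \neq 1\}$. By Theorem 3.1 of \cite{Jupp2008}, $\hat k \to 1$ in probability under the null, i.e.\ $P_0[\hat k \neq 1] \to 0$. Since $\hat k$ is a measurable function of $\mtrx{X}^{(n)}$, the sets $A_n$ are measurable events in the sample space of $n$ observations. Here $P_0$ and $P_{\kappa_n}$ are understood as the joint laws of the $n$ iid observations under the null and under the alternative, respectively. Contiguity $P_{\kappa_n} \lhd P_0$ then gives $P_{\kappa_n}(A_n) \to 0$, which is exactly $P_{\kappa_n}[\hat k = 1] \to 1$.

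The steps, in order, are as follows.

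First, I fix the framework. For each $n$ the measures $P_0^{(n)}$ and $P_{\kappa_n}^{(n)}$ live on $(\Sd)^n$, and contiguity is meant in this sense. This is the standard Le Cam setting, and it is the sense in which the local alternatives $P_{\kappa_n,g}$ of Result~\ref{cor:parity} are contiguous.

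Second, I check that $\hat k$ is measurable. In the practical version $m$ ranges over $\{1,\dots,M\}$, so $\hat k$ is the first maximizer among finitely many continuous functions $B_S(1),\dots,B_S(M)$ of the data, and hence measurable. For the version with $m \in \mathbb N$, the event $\{\hat k = 1\}$ can be written through countably many comparisons $B_S(1) \geq B_S(m)$ together with strict inequalities. This is still measurable, provided the supremum is attained, which Jupp's setup guarantees almost surely.

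Third, I invoke Jupp's null result. If one wants to see why it holds: under $\mathcal H_0$ each $S_K = O_P(1)$, since it converges to $\chi^2_{p_K}$. Then
\[
B_S(K) - B_S(1) = S_K - S_1 - (p_K - p_1)\log n \to -\infty
\]
in probability for each $K \geq 2$, because $p_K > p_1$. The infinite range requires a uniform control, which is what Jupp handles; I will simply cite it.

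Fourth, I apply the definition of contiguity with $A_n = \{\hat k \neq 1\}$ and take complements.

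I expect no real obstacle. The only delicate point is the second step, together with making sure Jupp's null convergence is stated for the same definition of $\hat k$, including the tie-breaking by the infimum and the possibly unbounded range of $m$. If that causes trouble, I would restrict to the practical version with a fixed upper limit $M$, where everything is a finite-dimensional continuous-mapping argument.
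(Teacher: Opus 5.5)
Your proposal is correct and is the paper's proof: Jupp's Theorem 3.1 gives $P_0[\hat k \neq 1] \to 0$, and contiguity applied to $A_n = \{\hat k \neq 1\}$ (the paper's $\{\hat k > 1\}$) transfers this to $P_{\kappa_n}$, after which you take complements. Your extra measurability step is sound but more careful than needed. The event $\{\hat k = 1\} = \bigcap_{m} \{B_S(1) \geq B_S(m)\}$ is measurable without strict inequalities and without assuming the supremum is attained.
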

\begin{proof}
    This lemma is a direct application of the definition of contiguity. From Theorem 3.1 in \cite{Jupp2008}, and contiguity of $P_{\kappa_n}$:
    \begin{equation*}
        \underbrace{P_0[\hat{k} = 1] \xrightarrow{n\to\infty} 1}_{\text{Theorem 3.1 in \cite{Jupp2008}}}
        \implies P_0[\hat{k} > 1] \xrightarrow{n\to\infty} 0 
        \underbrace{\implies P_{\kappa_n}[\hat{k} > 1] \xrightarrow{n\to\infty} 0}_{ P_{\kappa_n} \lhd P_0} \implies P_{\kappa_n}[\hat{k} = 1] \xrightarrow{n\to\infty} 1. 
    \end{equation*}
\end{proof}

From \Cref{cor:parity}, this test will be blind to contiguous alternatives if the $k$\textsuperscript{th} derivatives of their angular functions vanish at zero for every $k$ odd. This is, of course, not desirable, as there exist other Sobolev tests with non-trivial asymptotic powers against these alternatives (such as the Bingham test), and it would be highly appropriate for a data-driven test to select one of these instead of the blind test. 

We propose a simple solution to counter this problem, which is to include at least one $k$ of each parity in the Sobolev test. Thus, an adapted data-driven Sobolev test takes the following form:   

\begin{equation}
    \label{eq:data-driven test_alternative}
    S_{\Hat{k}^*} := \frac{1}{n} \sum_{i,j=1}^{n} \sum_{k=1}^{\Hat{k}^*} h_k (\bm{x}_i^\top \bm{x}_j), \quad \quad \quad    \Hat{k}^* = \max\{\Hat{k}, 2\},
\end{equation}
and $\Hat k$ is as defined in \Cref{eq:data-driven test}.

The adapted data-driven Sobolev test $S_{\Hat{k}^*}$ is identical to the original data-driven test, except that the smallest value accepted of $\Hat{k}^*$ is 2. The test rejects uniformity for large values of $S_{\Hat{k}}$, and it is easy to see that it is still consistent against all alternatives to uniformity in a fixed setting. However, now it will not be blind to contiguous alternatives whose odd-ordered derivatives vanish at zero. The price to pay is the decrease in power when testing against alternatives that could be easily rejected at $\Hat{k} = 1$. In fact, we have the following results: 

\begin{theorem}
Let $\hat k^*$ be defined as in \Cref{eq:data-driven test_alternative}. Then
    \label{thm:k_hat_alternative}
    \begin{equation*}    
    \Hat{k}^* \overset{P}{\underset{\mathcal{H}_0}{\longrightarrow}} 2 \quad \text{ as }\; n \to \infty.
    \end{equation*}
\end{theorem}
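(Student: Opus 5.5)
The plan is to reduce the statement directly to Theorem 3.1 in \cite{Jupp2008}, which the paper already invoked in the proof of \Cref{lemma:main_result}: under $\mathcal{H}_0$, $P_0[\hat k = 1] \to 1$ as $n\to\infty$. Since $\hat k^* = \max\{\hat k, 2\}$ is a deterministic function of $\hat k$, we get $\hat k^* = 2$ if and only if $\hat k \in \{1,2\}$. Because $\hat k$ is integer-valued, convergence in probability to the constant $2$ amounts to $P_0[\hat k^* = 2]\to 1$. For any $\varepsilon\in(0,1)$ we have $P_0[|\hat k^* - 2| > \varepsilon] = P_0[\hat k^* \neq 2]$.

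The key steps are as follows. First, note the event inclusion $\{\hat k = 1\} \subseteq \{\hat k \le 2\} = \{\hat k^* = 2\}$. Second, deduce
\[
P_0[\hat k^* = 2] \;\ge\; P_0[\hat k = 1] \xrightarrow{n\to\infty} 1,
\]
so $P_0[|\hat k^* - 2| > \varepsilon] \le P_0[\hat k \neq 1] \to 0$ for every $\varepsilon > 0$. Third, conclude $\hat k^* \overset{P}{\longrightarrow} 2$ under $\mathcal{H}_0$.

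There is essentially no obstacle beyond relying on the cited result. The one point to check is that Theorem 3.1 of \cite{Jupp2008} applies to $\hat k$ exactly as defined in \Cref{eq:data-driven test}. That definition uses the supremum over all $m\in\mathbb{N}$ and the penalty $p_K\log n$. If one instead uses the practical version with an upper limit $M$, the same argument goes through provided Jupp's theorem is stated for that version.

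For a self-contained argument, one would show directly that $P_0[B_S(K) \ge B_S(1)] \to 0$ for each $K\ge 2$ (in the finite-$M$ case). This follows because $S_K - S_1 = O_P(1)$ under the null, by the $\chi^2$ limit from \cite{Gine1975}, while $(p_K - p_1)\log n \to \infty$. A union bound over $K=2,\dots,M$ then finishes the argument. The infinite-$m$ case requires Jupp's uniform control, which we simply cite.
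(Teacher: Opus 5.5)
Your proposal is correct and follows the same route as the paper. Both arguments invoke Theorem 3.1 of \cite{Jupp2008} to get $\hat k \to 1$ in probability under $\mathcal{H}_0$, then pass this through the map $\hat k \mapsto \max\{\hat k, 2\}$; your event inclusion $\{\hat k = 1\} \subseteq \{\hat k^* = 2\}$ just spells out the paper's one-line argument, and your optional finite-$M$ sketch is sound but not needed.
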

\begin{proof}
The proof is trivial. From Theorem 3.1 in \cite{Jupp2008}, we have that $\hat k$ converges in probability to $1$ under $\mathcal H_0$. Therefore, $\hat k^*$ converges in probability to $\max\{1, 2\}$ = 2 under $\mathcal H_0$.
\end{proof}

\begin{theorem}
Let $S_{\hat k^*}$ be defined as in \Cref{eq:data-driven test_alternative}. Then 
    \label{thm:k_hat_alternative_asymptotic_distribution}
    \begin{equation*}
        S_{\Hat{k}^*} \overset{\mathcal{D}}{\underset{\mathcal{H}_0}{\longrightarrow}} \chi_{d_1+d_2}^2 \quad \text{ as }\; n \to \infty. 
    \end{equation*}
\end{theorem}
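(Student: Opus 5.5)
The plan is to compare $S_{\hat k^*}$ with the fixed-order quadratic score statistic $S_2$ of \eqref{eq:quadratic_score_test}. By \Cref{thm:k_hat_alternative}, $\hat k^* = 2$ with probability tending to one under $\mathcal H_0$. On the event $\{\hat k^* = 2\}$ the two statistics coincide identically, $S_{\hat k^*} = S_2$. Hence for every $\varepsilon>0$,
\begin{equation*}
P_0\bigl[\,|S_{\hat k^*} - S_2| > \varepsilon\,\bigr] \le P_0[\hat k^* \neq 2] \xrightarrow{n\to\infty} 0,
\end{equation*}
so $S_{\hat k^*} - S_2 = o_P(1)$ under $\mathcal H_0$. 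By Slutsky's lemma, $S_{\hat k^*}$ then has the same limiting distribution as $S_2$, provided the latter has one.

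The second step identifies the null limit of $S_2$. It is a Sobolev statistic with the finitely supported sequence $v_k = \mathbb{I}[k\le 2]$, so the null asymptotics stated in \Cref{chap:background} (following from Theorem 4.1 in \cite{Gine1975}) give
\begin{equation*}
S_2 \xrightarrow[\mathcal H_0]{\mathcal D} \chi^2_{d_1} + \chi^2_{d_2},
\end{equation*}
where the two chi-square variables are independent. The independence holds because the limit arises from the squared norms of Gaussian projections onto mutually orthogonal spaces of spherical harmonics of degrees $1$ and $2$, and these projections are uncorrelated jointly Gaussian vectors. A sum of independent $\chi^2_{d_1}$ and $\chi^2_{d_2}$ variables is $\chi^2_{d_1+d_2}$, which gives the claim.

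No step is really an obstacle. The only point needing care is that the replacement argument must be applied to the random index $\hat k^*$ on an event of asymptotically full probability; one should not try to plug a random $K$ into a fixed-$K$ limit theorem. The indicator bound above handles this directly. The independence of the limiting components should also be stated explicitly, since it is what allows the two chi-square distributions to be merged into one with $d_1+d_2$ degrees of freedom.
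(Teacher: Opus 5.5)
Your proposal is correct and follows the route the paper intends. The paper gives no explicit proof and calls it a simple analogue of Jupp's proof of his Theorem 3.2: since $\hat k^* = 2$ with probability tending to one under $\mathcal H_0$, $S_{\hat k^*} - S_2 = o_P(1)$, and $S_2$ has the null limit $\chi^2_{d_1}+\chi^2_{d_2} = \chi^2_{d_1+d_2}$; your explicit remark that the two chi-square components are independent is a detail the paper leaves implicit.
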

\begin{theorem}
    \label{thm:adapted_k_hat_consistency}
The test $\phi^*(\bm x^{(n)})$ which rejects uniformity for large values of $S_{\Hat{k}^*}$ is consistent against all (fixed) alternatives to uniformity, i.e:  
\begin{equation*}
\mathbb{E}_{\mathcal{H}_1}[\phi^*(\bm x^{(n)})] \xrightarrow{n\to\infty} 1
\end{equation*}
\end{theorem}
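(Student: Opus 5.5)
The plan is to reduce consistency of $\phi^*$ to the consistency of the original data-driven test (Theorem 3.3 in \cite{Jupp2008}), together with the null limit in \Cref{thm:k_hat_alternative_asymptotic_distribution}. Fix a level $\alpha\in(0,1)$. By \Cref{thm:k_hat_alternative_asymptotic_distribution}, the test $\phi^*$ rejects when $S_{\hat k^*}>c_{n,\alpha}$, where $c_{n,\alpha}\to c_\alpha$, the upper-$\alpha$ quantile of $\chi^2_{d_1+d_2}$. The critical value is therefore bounded in $n$. It then suffices to show that, under any fixed non-uniform alternative, $S_{\hat k^*}\to\infty$ in probability.

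The key monotonicity observation is that $S_K=\sum_{k=1}^K n^{-1}\sum_{i,j}h_k(\bm x_i^\top\bm x_j)$. Each summand equals $n^{-1}\sum_{\ell=1}^{d_k}\bigl(\sum_i \phi_{k\ell}(\bm x_i)\bigr)^2\ge 0$ by the addition theorem for spherical harmonics, where $\{\phi_{k\ell}\}$ is an orthonormal basis of degree-$k$ harmonics. Hence $K\mapsto S_K$ is nondecreasing, and since $\hat k^*\ge \hat k$ we get $S_{\hat k^*}\ge S_{\hat k}$ pointwise. Thus it is enough to show $S_{\hat k}\to\infty$ in probability under $\mathcal H_1$.

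For the latter I would directly reuse the argument behind Theorem 3.3 of \cite{Jupp2008}, making it self-contained if needed:
\begin{itemize}[label={}]
\item Under a fixed alternative, some $k_0$ has $\mathbb E[\phi_{k_0\ell}(\bm X)]\neq0$ for some $\ell$.
\item Hence $S_{k_0}/n\to \sum_{k\le k_0}\sum_\ell (\mathbb E\phi_{k\ell})^2=:\delta>0$, and $B_S(k_0)\ge n\delta/2$ with probability tending to one.
\item Since $B_S(\hat k)\ge B_S(k_0)$, we get $S_{\hat k}\ge B_S(\hat k)\ge n\delta/2\to\infty$ with probability tending to one.
\end{itemize}
Combining, $\mathbb E_{\mathcal H_1}[\phi^*]=P_{\mathcal H_1}(S_{\hat k^*}>c_{n,\alpha})\to1$.

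The only delicate point is that $\hat k$ is a supremum over all $m\in\mathbb N$, so we need $\hat k<\infty$ almost surely. If the practical truncation at $M$ is used, this is immediate. Otherwise I would note that $S_m\le$ a bound of order $n\cdot p_m/n$-ish growth while $p_m\log n\to\infty$ faster, or simply invoke Jupp's well-definedness. I expect this to be the main technical obstacle, though it is inherited from the original test rather than being new here. With the truncation at $M\ge k_0$, the choice of $k_0$ must also be within range; otherwise consistency holds only against alternatives detectable up to degree $M$, and this caveat should be stated.
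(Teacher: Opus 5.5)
Your proposal is correct and is essentially the argument the paper has in mind when it calls the proof an analogous version of Jupp's Theorem 3.3. The penalized-score chain $S_{\hat k}\ge B_S(\hat k)\ge B_S(k_0)\ge n\delta/2$ is the standard consistency argument, and your addition-theorem monotonicity $S_{\hat k^*}\ge S_{\hat k}$ is exactly the extra step needed to carry it over to $\hat k^*=\max\{\hat k,2\}$. You are also right on two further points: you need $S_{\hat k}\to\infty$ in probability, not Jupp's conclusion verbatim, because the critical value is now a $\chi^2_{d_1+d_2}$ quantile rather than a $\chi^2_{d_1}$ one; and finiteness of $\hat k$ comes with the original definition rather than being something new to prove here.
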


The proofs are simple analogous versions of those of Theorems 3.2 and 3.3 in \cite{Jupp2008}. In addition, the following result shows the existence of non-trivial asymptotic power for the adapted data-driven Sobolev test against any contiguous sequence of alternatives of the form in \Cref{eq:angular_distribution} and for which $g^{(k)}(0) \neq 0$ for some $k$:

\begin{corollary}
    Let $S_{\Hat{k}^*}^{(n)}$ be the adapted data-driven Sobolev test defined in \eqref{eq:data-driven test_alternative}. Let \(g\) be an angular function that is $q \;(\ge \max\{k : v_k \neq 0\})$ times differentiable at zero such that $g^{(k)}(0) \neq 0$ for at least one $k \leq q$. Let $k_*$ be the smallest $k$ for which $g^{(k)}(0) \neq 0$. Set a non-centrality parameter
    \[
    \xi_{k,k_*}(\tau)
    \;=\;
    \frac{1}{(k_*!)^2}\,w_{k}^2\,\bigl(g^{(k_*)}(0)\bigr)^2\,\tau^{2k_*}
    \left(\sum_{j=0}^{\lfloor k/2\rfloor}(-1)^j\,c^{(d-2)/2}_{k,j}\,a_{\,k+k_*-2j}\right)^2,
    \]
    where
    \begin{equation*}
    w_k :=
\begin{cases}
\sqrt{2}, & \text{ if } d = 2,\\[6pt]
\dfrac{1 + \frac{2k}{(d-2)}}{\sqrt{d_{k}}}, & \text{ if } d \ge 3
\end{cases}, \quad \quad 
a_m 
 = \mathbb{I}\Bigl[\tfrac m2\in\mathbb{N}\Bigr]\,
\prod_{r=0}^{\tfrac m2-1}\frac{1 + 2r}{d + 2r},
\end{equation*}
\normalsize
and with the convention for $a_m$ that an empty product is equal to one. Then, under \(P_{\kappa_n,g} \lhd P_{0}\)  with \(\kappa_n=n^{-1/(2k_*)}\tau\),
    \[
      S_{\Hat{k}^*}^{(n)} \xrightarrow{\mathcal D}
      \sum_{k=1}^{2} \chi^2_{d_{k}}\!\Bigl(\mathbb I[k\sim k_*]\,\xi_{k,k_*}(\tau)\Bigr),
    \]
where the relation $a \sim b$ is satisfied when $a$ and $b$ share the same parity.
\end{corollary}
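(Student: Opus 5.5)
The plan is to reduce the data-driven statistic to the fixed-$K$ statistic $S_2$ and then import the non-null limit of $S_2$ from \cite{Garcia2024}. The key tool is the contiguity argument of \Cref{lemma:main_result}.

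\textbf{Step 1 (reduction to $S_2$).} Since $P_{\kappa_n,g}\lhd P_0$, \Cref{lemma:main_result} gives $P_{\kappa_n,g}[\hat k = 1]\to 1$. On the event $\{\hat k=1\}$ we have $\hat k^*=\max\{1,2\}=2$, so $S_{\hat k^*}^{(n)} = S_2^{(n)}$ there. Hence $S_{\hat k^*}^{(n)} - S_2^{(n)} = o_{P}(1)$ under $P_{\kappa_n,g}$; in fact it is $0$ with probability tending to one. By Slutsky's lemma it therefore suffices to derive the limit law of $S_2^{(n)}$, the Sobolev statistic with $v_k=\mathbb I[k\le 2]$ and $K_v=2$, under $P_{\kappa_n,g}$ with $\kappa_n = n^{-1/(2k_*)}\tau$.

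\textbf{Step 2 (non-null limit of $S_2$).} For this I would invoke the general local-alternative theorem of \cite{Garcia2024}, of which \Cref{cor:parity} is a special case. It states that under $P_{\kappa_n,g}$ with $\kappa_n=n^{-1/(2k_*)}\tau$, a finite Sobolev statistic converges to $\sum_k v_k^2\chi^2_{d_k}(\delta_k)$. The noncentralities $\delta_k$ are read off from the expansion of the alternative density along the Gegenbauer/cosine basis. Concretely, I would write
\[
c_{d,\kappa_n,g}\,g(\kappa_n \bm\mu^\top\bm x)=1+\frac{g^{(k_*)}(0)}{k_*!\,g(0)}\kappa_n^{k_*}\Bigl[(\bm\mu^\top\bm x)^{k_*}-\mathbb E_0(\bm\mu^\top\bm X)^{k_*}\Bigr]+o(\kappa_n^{k_*}),
\]
which is a Taylor expansion to order $k_*$ justified by the differentiability assumption. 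The normalizing constant $g(0)$ should be absorbed into the paper's convention, presumably $g(0)=1$.

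The mean of the $k$-th component $n^{-1/2}\sum_i \mathcal H_k(\bm x_i)$ is then of order $n^{1/2}\kappa_n^{k_*}=\tau^{k_*}$. It involves the projection $\mathbb E_0[h_k(\bm\mu^\top\bm X)(\bm\mu^\top\bm X)^{k_*}]$. Expanding $h_k$ through the coefficients $c^{(d-2)/2}_{k,j}$ reduces this projection to moments $\mathbb E_0(\bm\mu^\top\bm X)^{m}$ with $m=k+k_*-2j$. These moments equal $a_m$: they vanish for odd $m$, and for even $m$ the product formula comes from the Beta distribution of $(\bm\mu^\top\bm X)^2$. The factor $w_k$ arises from normalizing $h_k$ to unit variance across its $d_k$-dimensional eigenspace. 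Squaring gives $\xi_{k,k_*}(\tau)$. Because every $a_{k+k_*-2j}$ vanishes when $k+k_*$ is odd, the noncentrality is $0$ unless $k\sim k_*$, which produces the indicator in the statement.

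\textbf{Step 3.} Combining Steps 1 and 2 with the independence of the $k=1$ and $k=2$ components, which are asymptotically orthogonal Gaussian blocks, yields the stated sum of two independent noncentral chi-squares.

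\textbf{Main obstacle.} The only genuinely technical point is Step 2. The noncentrality must be obtained with exactly the paper's normalization, matching $w_k$, the $\tau^{2k_*}$ power, and $(k_*!)^{-2}$. I would take it directly from the general non-null theorem of \cite{Garcia2024} (their Theorem 5.1) specialized to $v_k=\mathbb I[k\le2]$, rather than rederive it via Le Cam's third lemma. I would only verify the parity cancellation through the vanishing of odd moments $a_m$.
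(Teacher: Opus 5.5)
Your proposal is correct and follows the paper's own argument. The paper likewise uses the contiguity lemma (\Cref{lemma:main_result}) to replace $S_{\hat k^*}^{(n)}$ by the fixed Sobolev statistic with $v_k=\mathbb{I}[k\in\{1,2\}]$, then applies the non-null theorem of \cite{Garcia2024} directly (the paper cites it as Theorem 5.2, not 5.1); your ``equal with probability tending to one, then Slutsky'' step and the odd-moment check of the parity indicator just make explicit what the paper leaves implicit.
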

\begin{proof}
We have $v_k = \mathbb{I}[k \in \{1, 2\}]$ for the adapted data-driven Sobolev test under contiguous alternatives, and $k_*$ will always exist under any of the alternative distributions considered. Direct application of Theorem 5.2 from \cite{Garcia2024} yields the final result. 
\end{proof}

\section{Simulations}
\label{chap:simulations}
In the following, we provide some simulation results comparing the original data-driven Sobolev test \citep{Jupp2008} with our adapted version. For every combination of $n \in \{200, 500, 1500\}$, $\ell \in \{2, 4, 6\}$ and $\tau \in \{0, 0.5, \ldots, 6\}$, we generated $M = 5,000$ independent random samples in $\mathbb S^2$ of von Mises-Fisher ($g(s) = \exp(s)$) and Watson distributions ($g(s) = \exp(s^2)$) with $\kappa_n = n^{-1/\ell}\tau$ (see \Cref{eq:angular_distribution}). In each of these samples, we computed the data-driven Sobolev test ($S_{\Hat{k}}$) and adapted data-driven Sobolev test ($S_{\Hat{k}^*}$). The results are presented in Figures 1a, 1b for the von Mises-Fisher alternatives, and Figures 1c, 1d for the Watson alternatives.

\begin{figure}[!ht]
    \centering
    \includegraphics[width=0.75\linewidth]{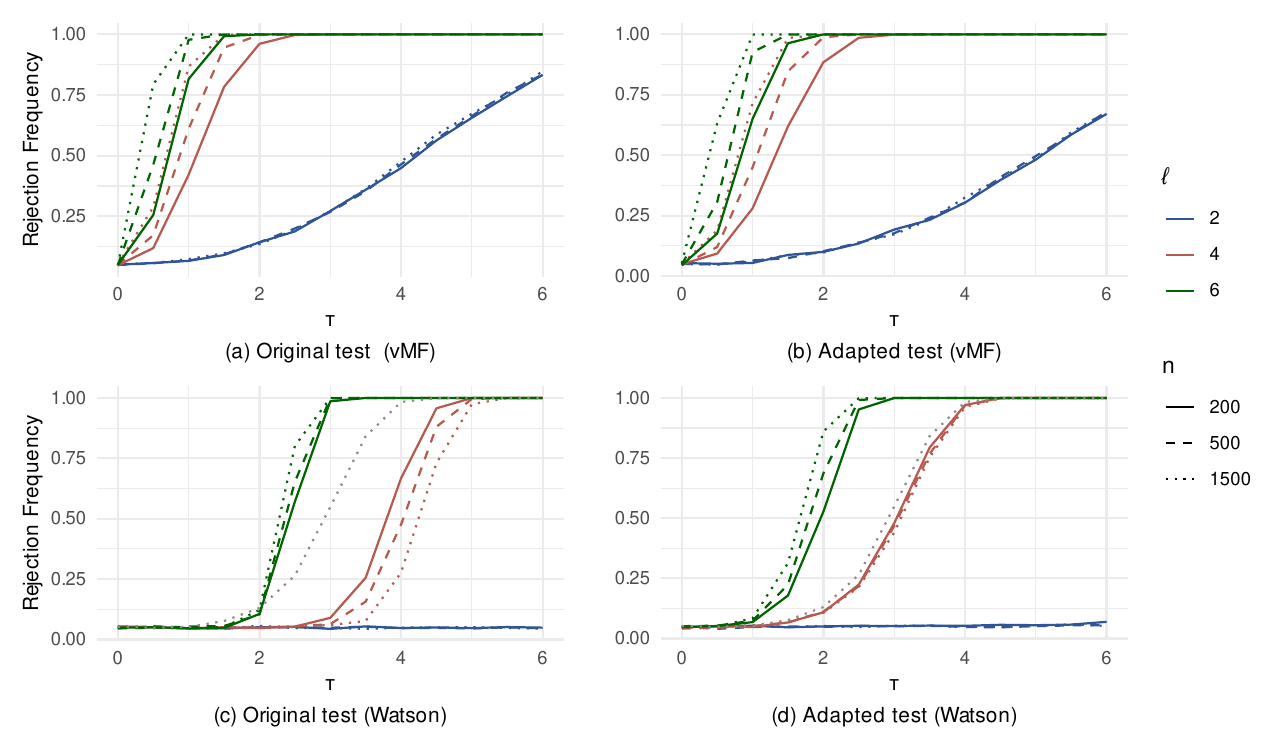}
    \caption{Rejection frequencies at asymptotic level $\alpha=5\%$ for $5000$ samples under (top) von Mises–Fisher and (bottom) Watson alternatives; left: data-driven Sobolev test, right: adapted test. Their respective angular functions are of the form $g(\kappa_n \bm x^\top \bm \mu)$, where $\kappa_n = n^{-\sfrac1\ell}\tau$ and $\bm \mu = \bm e_1$, from the standard basis. In the bottom figures, we included in gray the asymptotic power of the Bingham test, for comparison.}
    \label{fig:simulations}
\end{figure}
The Rayleigh test is the locally most powerful invariant test against von Mises-Fisher alternatives. Since these alternatives are contiguous to the null distribution when $\ell = 2$, we have $\Hat{k} \to 1$ and $\Hat{k}^* \to 2$ in probability. Therefore, Jupp's data-driven Sobolev test is the Rayleigh test in this scenario, while the adapted test is a combination of Rayleigh and Bingham tests. The Rayleigh test being the most powerful test suggests that Jupp's test should have higher asymptotic power than our adapted version. This is clearly supported by Figures 1a, 1b. The brown curves, representing the empirical rejection frequencies for $\ell = 2$, approximate the non-trivial asymptotic powers against contiguous alternatives for different values of $\tau$. When testing against these kinds of alternatives, the proposed test performs worse. However, it remains consistent against {von Mises-Fisher alternatives} for which the convergence rate $\kappa_n$ is slower than in the contiguous case ($\ell > 2$).

When testing against the Watson distribution, our theoretical results showed that Jupp's test should be blind against the contiguous alternatives, while our adapted version should show non-trivial asymptotic powers. Once again, this is clearly supported by Figures 1c, 1d. It is clear from Figure 1c that Jupp's test is blind to the contiguous alternative ($\ell = 4)$, since the increase in sample size decreases the rejection frequency for fixed $\tau$. The adapted test (Figure 1d) achieves non-trivial asymptotic powers when $\ell = 4$. For comparison, we include in gray the asymptotic power of the Bingham test ($v_k = \mathbb{I}[k = 2]$) against the contiguous Watson alternative. This indicates the cost that comes from using the data-driven tests instead of the Bingham test against this specific alternative.

\section{Conclusion}
\label{chap:conclusion}

In this work, we analyzed the data-driven Sobolev test under contiguous sequences of alternatives. This led to an adapted version of the test, correcting for the fact that the original data-driven test is blind under some contiguous alternatives. This comes at the cost of lower power against other alternatives. Simulations confirm our theoretical results. Directions for future research include exploring the asymptotic behavior of Sobolev tests under other groups of alternatives, such as multi-spiked alternatives like the Bingham distribution \citep{Garcia2024}, mixtures of distributions, as well as exploring other versions of the data-driven test.  

\section*{Acknowledgements}
The author gratefully acknowledges Thomas Verdebout (Université Libre de Bruxelles) for his guidance during the author's master's thesis which originated this work, and Johan Segers (KU Leuven) for valuable comments and careful proofreading of the manuscript.

\bibliographystyle{apalike}  
\bibliography{Bibliography}

\end{document}